\documentclass[12pt]{article}

\title{Applying hypersurface bounds to a conjecture by Carlet}
\author{Zo\"{e} Gemmell and Tim Trudgian\footnote{Supported by Australian Research Council Discovery Project DP240100186.} \\
School of Science, UNSW Canberra, Australia \\
z.gemmell@unsw.edu.au \quad\quad timothy.trudgian@unsw.edu.au }

%%%%%%%%%%%%%%%%%
% Used packages %
%%%%%%%%%%%%%%%%%
\usepackage{indentfirst}
\usepackage{url}
\usepackage{amsmath,amssymb,mathrsfs,amsthm}
\usepackage{comment}
\usepackage{fullpage}
\usepackage{hyperref}
\usepackage{booktabs}
\usepackage[dvipsnames]{xcolor} % what is this used for?

%%%%%%%%%%%%%%%%%%%%%%
% Internal functions %
%%%%%%%%%%%%%%%%%%%%%%
\newcommand{\FF}{\mathbb F}
\newcommand{\myrfloor}{\lfloor r \rfloor}
\newtheorem{theorem}{Theorem}
\newtheorem{lemma}[theorem]{Lemma}

\newtheorem{proposition}[theorem]{Proposition}

\begin{document}
\maketitle
\begin{abstract}
\noindent
A function from $\FF_{2^n}$ to $\FF_{2^n}$ is $k$th order sum-free if the sum of its values over each $k$-dimensional $\FF_2$-affine subspace is nonzero. It is conjectured that for $n$ odd and prime, $f_\textrm{inv}=x^{-1}$ is not $k$th order sum-free for $3 \leq k \leq n-3$. This is the unresolved part of Carlet's conjecture, which gives exact values for which $f_\textrm{inv}$ is $k$th order sum-free. We give two results as improvements on an explicit estimate on the number of $q$-rational points of an $\FF_q$-definable hypersurface previously proved by Cafure and Matera. We use these results to prove that $f_\textrm{inv}$ is not $k$th order sum-free for $3\leq k \leq \frac{3}{13}n+0.461$, improving on work previously done by Hou and Zhao.\\

\end{abstract}
\section{Introduction}
\noindent
Consider a function $f: \FF_{2^{n}} \rightarrow \FF_{2^{n}}$. We say that $f$ is `$k$th order sum-free' if, for every  $k$-dimensional affine subspace $X$ of $F_{2^{n}}$, we have $\sum_{x\in X}f(x) \neq 0$. Carlet first introduced this concept as a generalisation of almost perfect nonlinear (APN) functions in \cite{Carlet2}, as APN's are 2nd order sum-free functions. These functions are used in cryptography owing to their low differential uniformity and nonlinearity (see \cite{Blondeau}, Chapter 11 of \cite{Carlet1}, and the introduction of \cite{Charpin}).

Of particular interest is the function $f_{\textrm{inv}}(x)$, defined as 
\begin{equation*}
    f_{\textrm{inv}}(x) = \begin{cases}
        x^{-1}, & x \in \FF^*_{2^n} \\
        0, & x=0. \\
    \end{cases}
\end{equation*}

Carlet's conjecture is the statement that $f_{\textrm{inv}}$ is $k$th order sum-free, denoted $\textrm{SF}_n$, only for the following values of $k$:
\begin{equation*}
    \textrm{SF}_n = \begin{cases}
        k \in \{1,n-1\}, & \textrm{for even } n\\
        k \in \{1,2,n-2,n-1\}, & \textrm{for odd } n\geq3. \\
    \end{cases}
\end{equation*}

This can be rephrased as $f_{\textrm{inv}}$ is not $k$th order sum-free for $3\leq k \leq n-3$ when $n$ is odd and for $2\leq k \leq n-2$ when $n$ is even. The case of even $n$ was proved in \cite[Theorem 5.2]{Carlet4}; the case where $n$ is odd and composite was proved in in \cite{Hou2}. Many other partial results are known, we refer the reader to \cite{Hou1} for a summary.

We wish to focus on one such result in this short note. Hou and Zhao \cite[Theorem 5.1]{Hou1} prove that $f_{\textrm{inv}}$ is not $k$th order sum-free for $3\leq k \leq 3n/13 + 0.3161.$ One of the key ingredients in their proof is a seminal result by Cafure and Matera \cite{CM}. We improve this latter result slightly, which may be of independent interest. For example, our work could be used to improve the estimate for an arbitrary $\FF_q$-hypersurface (Theorem 5.7 of \cite{CM}). We will explore these applications in future work. 

We state our point-counting result in Theorem \ref{GemmellTrudgian}, which allows us to prove our main contribution below.
\begin{theorem} \label{blue}
    The function $f_{\textrm{inv}}$ is not $k$th order sum-free for $3\leq k \leq 3n/13+0.461$.
\end{theorem}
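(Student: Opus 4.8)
The plan is to recast the statement as the existence of a rational point on a hypersurface over $\FF_{2^n}$ and then apply our improved point-counting estimate. First I would reformulate the sum-free condition algebraically. Writing a $k$-dimensional affine subspace as $X=a+V$ with $V$ a $k$-dimensional $\FF_2$-linear subspace, let $L_V(x)=\prod_{v\in V}(x-v)=\sum_{i=0}^{k}\alpha_i x^{2^i}$ be its linearized vanishing polynomial; since $V$ furnishes $2^k$ distinct roots, $L_V$ is separable and hence $\alpha_0\neq0$. A short logarithmic-derivative computation then gives $\sum_{x\in X}f_{\textrm{inv}}(x)=\alpha_0/L_V(a)$ whenever $0\notin X$, which is nonzero, so no affine subspace avoiding the origin can witness a failure and it suffices to exhibit a \emph{linear} subspace $V$ with $\sum_{v\in V\setminus\{0\}}v^{-1}=0$. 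This condition is invariant under $V\mapsto cV$, so I would normalise one basis vector to $1$ and treat the remaining basis vectors $(v_2,\dots,v_k)$ as a point of $\mathbb{A}^{k-1}(\FF_{2^n})$; clearing denominators turns the condition into a single equation $F(v_2,\dots,v_k)=0$ of degree $2^k-2$, defining a hypersurface $\mathcal{H}\subset\mathbb{A}^{k-1}$ over $\FF_{2^n}$. This is essentially the reformulation of Hou and Zhao, whose verification that $\mathcal{H}$ is absolutely irreducible I would import.

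Next I would apply Theorem \ref{GemmellTrudgian} to $\mathcal{H}$, which is absolutely irreducible of dimension $r=k-2$ and degree $\delta=2^k-2$ in $\mathbb{A}^{k-1}$ over $\FF_q$ with $q=2^n$. This bounds $\#\mathcal{H}(\FF_{2^n})$ below by $q^{k-2}$ minus an error of the shape $(\delta-1)(\delta-2)\,q^{k-5/2}+C\,\delta^{13/3}\,q^{k-3}$, the gain over Cafure--Matera residing in the constant $C$. From this I would subtract the degenerate points of $\mathcal{H}$, namely those at which $1,v_2,\dots,v_k$ fail to be $\FF_2$-linearly independent; these are cut out on $\mathcal{H}$ by finitely many extra affine-linear equations, none of which can contain the higher-degree irreducible $\mathcal{H}$, so they form a subvariety of dimension $\le k-3$ and contribute $O(q^{k-3})$, of the same order as the second error term.

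The threshold then emerges by balancing the main term $q^{k-2}=2^{nk-2n}$ against the error. With $\delta\asymp2^k$ and $q=2^n$, the first error term has relative size $2^{2k-n/2}$, negligible once $k<n/4$, whereas the second, $C\,\delta^{13/3}q^{k-3}\asymp2^{13k/3+nk-3n}$, is dominated by the main term precisely when $13k/3<n$, that is $k<3n/13$; this is the binding constraint and the origin of the denominator $13$. Propagating the explicit constants---now with our smaller value of $C$ in place of the $5$ available from Cafure--Matera, together with the $(\delta-1)(\delta-2)$ factor and the degenerate-point count---shifts the admissible additive constant from Hou and Zhao's $0.3161$ up to $0.461$. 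A strictly positive count of good points then yields a linear subspace witnessing that $f_{\textrm{inv}}$ is not $k$th order sum-free throughout $3\le k\le 3n/13+0.461$.

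I expect the main obstacle to be the explicit constant optimisation rather than any conceptual step. One must check that the hypotheses of Theorem \ref{GemmellTrudgian} (a lower bound on $q$ in terms of $\delta^2$ and $r$) genuinely hold across the whole range of $k$ considered, keep honest track of every lower-order contribution, and dispose of the finitely many small pairs $(k,n)$ by hand, since the asymptotic balance governs only the generic regime. It is precisely the sharper error constant that must be carried carefully through these estimates to realise the full improvement to $0.461$.
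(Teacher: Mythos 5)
There is a genuine gap, and it lies in the algebraic reformulation rather than in the point-counting. Your reduction to linear subspaces (via $\sum_{x\in a+V}f_{\textrm{inv}}(x)=\alpha_0/L_V(a)\neq 0$ when $0\notin a+V$) is correct and standard, but the hypersurface you then build is not the one Hou and Zhao use, and the difference is fatal to the constant. Clearing denominators in $\sum_{v\in V\setminus\{0\}}v^{-1}=0$ gives a polynomial of degree $2^k-2$ in the basis vectors; the whole content of \cite{Hou1} (imported in the paper via their Theorem 3.2) is that this condition is instead governed by the polynomial $\Theta_k$ in $k$ variables of degree only $2^{k-1}$ --- a sum of monomial symmetric functions over $2$-adic partitions of $2^{k-1}$ --- which they prove is absolutely irreducible, together with the Moore determinant $\Delta$ to exclude degenerate tuples. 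Your degree-$(2^k-2)$ numerator is a different polynomial, and Hou--Zhao's irreducibility theorem says nothing about it; without absolute irreducibility the estimate of Theorem \ref{HypersurfaceEstimate} does not apply to it at all. Even granting irreducibility, the degree enters as $\delta^{13/3}$, so with $\delta\approx 2^k$ the binding constraint becomes $2^n\gtrsim C\,2^{13k/3}$, i.e.\ $n\gtrsim \tfrac{13}{3}k+\log_2 C$, giving only $k\lesssim \tfrac{3}{13}n-0.35$; with $\delta=2^{k-1}$ one gains $\tfrac{13}{3}$ in $n$, hence $1$ in $k$, which is exactly where the paper's $n\geq \tfrac{13}{3}k-2$ (Theorem \ref{CarletImprovement}) and the final constant $+0.461$ come from. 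Your route cannot reach the stated bound.

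Two smaller corrections. The point-counting input is Theorem \ref{HypersurfaceEstimate} and Lemma \ref{HypersurfaceEstimateLemma} (with $\delta=2^{k-1}$, $G=2.924$ or $2.741$), not Theorem \ref{GemmellTrudgian}, and it carries no hypothesis lower-bounding $q$ in terms of $\delta$ --- that caveat belongs to the higher-codimension estimates of \cite{CM}, not to the hypersurface case. Also, the paper does not obtain the final constant purely asymptotically: it disposes of $k=3$ by citing Carlet (\cite[Theorem 6]{Carlet3}), computes explicit thresholds for $4\leq k<100$ (the worst case $k=9$ forcing $n\geq \tfrac{13}{3}k-2$), and only then runs the quadratic-in-$q^{1/2}$ argument for $k\geq 100$; the degenerate tuples are controlled by the B\'ezout-type bound $|V(\Theta_k)\cap V(\Delta)|\leq (2^{k-1})^2q^{k-2}$ rather than by a dimension count. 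These are repairable bookkeeping points, but the degree issue above is a conceptual obstruction to your plan as written.
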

We need to introduce some background on the result from Cafure and Matera bounding the number of $q$-rational points on an absolutely irreducible $\FF_q$-definable hypersurface $H$. See (Section 1 of \cite{CM}) for a thorough background. Let $p$ be a prime number and $q:=p^k$ where $k>0$. Let $\FF_q$ be the finite field of $q$ elements and let $\overline{\mathbb{F}}_q$ be the algebraic closure of $\FF_q$. Finally let $F_1,\ldots,F_m \in \FF_q[X_1,\ldots,X_n]$ be a set of polynomials and let $V$ be the affine subvariety of $\overline{\mathbb{F}}_q^n$ defined by the polynomials $F_1,\ldots,F_m$.

We seek to improve the following result for an absolutely irreducible $\FF_q$-hypersurface $H \subset \overline{\mathbb{F}}_q^n$ of degree $\delta$ \cite[Theorem 5.2]{CM}:
\begin{equation*}
    |\#(H\cap\FF_q^n)-q^{n-1}| \leq (\delta-1)(\delta-2)q^{n-\frac{3}{2}}+5\delta^{\frac{13}{3}}q^{n-2}.
\end{equation*}

To do this we improve an earlier result of Cafure and Matera on the  number of restrictions of a given absolutely irreducible polynomial $f\in\FF_q[X_1,\ldots,X_n]$ to affine planes $L$ having a fixed number of absolutely irreducible factors over $\FF_q$, presented in Section 2. In Section 3 we prove the improved results for an absolutely irreducible $\FF_q$-hypersurface $H \subset \overline{\mathbb{F}}_q^n$ of degree $\delta$. Finally in Section 4 we apply these results to prove Theorem 1.

\section{The number of planes for which $f_L$ has a fixed number of absolutely irreducible $\FF_q$-factors}

Let $f$ be a polynomial where $f\in \FF_q[X_1,\ldots,X_n]$ of degree $\delta>0$. Let $L$ be a plane and $f_L$ be the restriction of $f$ to $L$. Cafure and Matera proved the following result.
\begin{proposition}\label{CandMProp4.1}
    (\cite[Proposition 4.1]{CM})Let $f \in \FF_{q}[X_1,\ldots,X_n]$ be a polynomial of degree $\delta >1$. We have
    \begin{equation*}
        \sum_{j=1}^{\delta-1}j\#(\Pi_j)\leq (2\delta^{\frac{13}{3}}+3\delta^{\frac{11}{3}})\frac{q^{3n-3}}{q^3(q-1)}.
    \end{equation*}
\end{proposition}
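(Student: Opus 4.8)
The plan is to translate the left-hand side into a point count on an auxiliary variety over the parameter space of planes, and then to control that variety with an effective Bertini theorem together with a hypersurface point-counting bound. First I would fix a parametrization of the family of planes: writing a plane as the image of an affine embedding $(S,T)\mapsto A+SV+TW$ with $A,V,W$ ranging over $\FF_q^n$ (subject to the normalization responsible for the factor $q^{3}(q-1)$ in the statement), the restriction $f_L$ becomes a bivariate polynomial $f(A+SV+TW)\in\FF_q[S,T]$ of degree at most $\delta$ whose coefficients are polynomials in the entries of $A,V,W$. The weighted sum $\sum_{j=1}^{\delta-1}j\,\#(\Pi_j)$ is then an incidence count tallying, over all planes $L$, the absolutely irreducible $\FF_q$-factors of $f_L$; its size is governed by how often, and how badly, $f_L$ fails to be absolutely irreducible as $L$ varies.

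The key step is to localize and measure this non-generic behaviour. By an effective form of the first Bertini theorem, the planes $L$ for which $f_L$ is not absolutely irreducible are confined to a proper subvariety $\mathcal{R}$ of the parameter space, namely the pullback along the coefficient map of the variety of non-absolutely-irreducible bivariate polynomials of degree $\delta$. I would bound both $\dim\mathcal{R}$ and $\deg\mathcal{R}$ in terms of $\delta$; it is exactly the degree estimate for the variety of non-absolutely-irreducible degree-$\delta$ forms that supplies the powers $\delta^{13/3}$ and $\delta^{11/3}$. With these invariants in hand, a hypersurface / Lang--Weil-type bound counts the $\FF_q$-rational points of $\mathcal{R}$, i.e.\ the bad planes, while the weight $j\le\delta-1$ is absorbed by stratifying $\mathcal{R}$ according to the factorization type of $f_L$ and summing the strata.

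The main obstacle is the sharp degree estimate on this reducibility locus: obtaining the exponent $13/3$ rather than a cruder power of $\delta$ requires a careful analysis of the variety of non-absolutely-irreducible bivariate polynomials --- parametrizing the factorizations $g\cdot h$ with $\deg g+\deg h=\delta$ and bounding the degree of the image under the multiplication map --- and then propagating that bound through the pullback without sacrificing the codimension that produces the saving factor $1/\bigl(q^{3}(q-1)\bigr)$. A secondary difficulty is the bookkeeping of the weight $j$, which must be matched to the stratification so that a plane carrying several factors is counted exactly the right number of times.
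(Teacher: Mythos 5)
There is a genuine gap, and it is worth noting first that the paper does not prove this proposition at all: it is quoted verbatim from Cafure and Matera \cite[Proposition 4.1]{CM}, and the mechanism of its proof is only visible indirectly in the paper's proof of Theorem \ref{GemmellTrudgian}, which reruns the same argument with sharper constants. Your overall framing --- parametrize planes by $(A,V,W)$, invoke an effective Bertini theorem to confine the planes with reducible $f_L$ to a proper subvariety of controlled degree, and count its $\FF_q$-points --- is indeed the correct source of the bound on the \emph{total} number of bad planes (this is Corollary 3.2 of \cite{CM}, giving roughly $\delta^4\,q^{3n-6}/(q-1)$). But that input alone cannot produce the exponent $13/3$. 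If you only know the size of the full reducibility locus and bound the weight by $j\le\delta-1$, you get $\sum_j j\,\#(\Pi_j)\le(\delta-1)\sum_j\#(\Pi_j)\lesssim\delta^5\,q^{3n-6}/(q-1)$, which is strictly worse than the claimed $\delta^{13/3}$. Your assertion that the degree estimate for the variety of non-absolutely-irreducible forms ``supplies the powers $\delta^{13/3}$ and $\delta^{11/3}$'' is therefore not right: no single degree bound yields those fractional exponents.

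The missing idea is a second, $j$-dependent family of bounds together with a cutoff optimization. Cafure and Matera prove (their inequality (11), reflected in the sums $c_1,\dots,c_4$ of \eqref{constants}) that $\#(\Pi_j)$ decays like $\delta^5/j^3$ (plus lower-order terms) as the number $j+1$ of absolutely irreducible $\FF_q$-factors grows, because a restriction $f_L$ with many factors is a much rarer event than mere reducibility. One then splits the sum at a real parameter $r$: for $j\le r$ one uses $j\le r$ times the Bertini-type total count $\asymp r\delta^4$, and for $j>r$ one sums the decaying bounds to get $\asymp\delta^5/r^2+\delta^4/r+\cdots$. Balancing $r\delta^4$ against $\delta^5/r^2$ forces $r\asymp\delta^{1/3}$, and this is precisely where $\delta^{4+1/3}=\delta^{13/3}$ and the secondary term $\delta^4/r\asymp\delta^{11/3}$ come from. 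Your stratification remark gestures at this, but without the quantitative decay of $\#(\Pi_j)$ in $j$ and the explicit two-regime split the weighted sum cannot be closed at the stated strength.
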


We improved Cafure and Matera's result below with the exception of some values of $\delta$. We also give an improvement that applies to all values of $\delta$ in Lemma \ref{GemmellTrudgianLemma}.
\begin{theorem}\label{GemmellTrudgian}
    Let $f \in \FF_{q}[X_1,\ldots,X_n]$ be a polynomial of degree $\delta >1$ and $\delta \notin [6,37]$. We have
    \begin{equation*}
        \sum_{j=1}^{\delta-1}j\#(\Pi_j)\leq (1.99\delta^{\frac{13}{3}})\frac{q^{3n-3}}{q^3(q-1)}.
    \end{equation*}
    Where $\Pi_j$ is the set of planes L for which $f_L$ has j+1 absolutely irreducible $\FF_q$-factors.
\end{theorem}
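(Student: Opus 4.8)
The plan is to reopen Cafure and Matera's proof of Proposition~\ref{CandMProp4.1}, carrying every constant explicitly rather than in the rounded form they present. In that argument the weighted count $\sum_{j}j\#(\Pi_j)$ is controlled by the number of $\FF_q$-rational planes $L$ at which $f_L$ degenerates, weighted by the excess in its number of absolutely irreducible factors; such planes lie in a proper (generically codimension-one) subvariety of the ambient family of planes. The count of $\FF_q$-rational planes in this locus is bounded by a polynomial in $\delta$ times $q^{3n-3}/(q^3(q-1))$, and CM coarsen that polynomial to $2\delta^{13/3}+3\delta^{11/3}$. The first point to record is that absorbing the tail alone can never reach $1.99\delta^{13/3}$, since $2\delta^{13/3}+3\delta^{11/3}>2\delta^{13/3}$; the leading coefficient itself must be pushed strictly below $2$.

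For the main range $\delta\ge 38$ I would establish a refined bound of the shape $\bigl(c_0\delta^{13/3}+c_1\delta^{11/3}\bigr)\,q^{3n-3}/(q^3(q-1))$ with $c_0<2$, obtained by sharpening the dominant degree and point-count estimate for the degeneration locus---for instance by decomposing it into its absolutely irreducible $\FF_q$-components and summing $\deg(V_i)\,q^{\dim V_i}$ over components (which improves on a single global Bezout bound whenever lower-dimensional components occur), and by invoking the sharpest available point count for each piece. The theorem then reduces to the elementary inequality $c_1\delta^{11/3}\le(1.99-c_0)\delta^{13/3}$, i.e. $\delta^{2/3}\ge c_1/(1.99-c_0)$; the cut-off $\delta=38$ is precisely the smallest integer at which this first holds with the constants $c_0,c_1$ produced by the refinement, which is exactly why the window $[6,37]$ is excluded.

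For $2\le\delta\le 5$ I would avoid coarsening altogether. Here $f_L$ has at most $\delta$ absolutely irreducible factors, so $\sum_{j=1}^{\delta-1}j\#(\Pi_j)$ has only a handful of terms and retaining CM's estimate in its pre-coarsened form yields an exact polynomial bound $P(\delta)$ on the coefficient. It then suffices to verify the four inequalities $P(\delta)\le 1.99\delta^{13/3}$ for $\delta\in\{2,3,4,5\}$ by direct computation; since $1.99\delta^{13/3}$ already exceeds $40$ at $\delta=2$ and two thousand at $\delta=5$, there is ample slack. The intermediate degrees $6\le\delta\le 37$, where neither the large-$\delta$ absorption nor the small-$\delta$ slack applies, are left to the uniformly valid Lemma~\ref{GemmellTrudgianLemma}.

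The hard part lies squarely in the second paragraph: strictly beating the constant $2$ in the leading term. Tightening the error terms is routine bookkeeping, but lowering $c_0$ demands a genuine improvement in the estimate that generates the exponent $\tfrac{13}{3}$---most plausibly a component-wise point count on the degeneration locus in place of CM's global estimate---together with a careful verification that the resulting pair $(c_0,c_1)$ makes the threshold inequality hold for the first time exactly at $\delta=38$, and not sooner.
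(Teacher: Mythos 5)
There is a genuine gap here: the step you yourself flag as ``the hard part'' --- pushing the leading coefficient strictly below $2$ --- is only gestured at, and the mechanism you propose for it (a component-wise point count on the degeneration locus in place of a global Bezout bound) is neither carried out nor actually needed. No new geometric input is required. In Cafure and Matera's argument the coefficient $2\delta^{13/3}+3\delta^{11/3}$ arises from splitting $\sum_j j\#(\Pi_j)$ at a parameter $r$, bounding the head by $r$ times their Corollary 3.2 and the tail via sums $c_1,\dots,c_4$ over $j>\lfloor r\rfloor$, and then choosing $r=\lambda\delta^{1/3}$; the true minimum of the leading coefficient $\tfrac{1}{2\lambda^2}+\tfrac{3\lambda}{2}$ is $\approx 1.9656$, already below $2$, so the constant $1.99$ is obtained purely by sharper bookkeeping --- comparing the sums $c_i$ with integrals starting at $\lfloor r\rfloor$ rather than at $r$, tracking the factor $\gamma=r/(r-1)$, and retaining the negative lower-order terms. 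Your proposal never identifies this, and the refinement you do propose (a $c_0<2$ from decomposing the degeneration locus into irreducible components) comes with no argument for why it would lower the leading coefficient rather than merely the secondary one.

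The second gap is structural. The asymptotic absorption only works once $\gamma=r/(r-1)$ is close to $1$, i.e.\ once $r=\lambda\delta^{1/3}$ is large; the paper needs $\delta\ge 3\times 10^5$ to get $\gamma<1.02$. Consequently the entire range $\delta\in[38,\,3\times 10^5]$ cannot be dispatched by your threshold inequality $\delta^{2/3}\ge c_1/(1.99-c_0)$; in the paper it is handled by an exhaustive computation over every integer $r\in(1,\delta-1)$ for each such $\delta$. Relatedly, your explanation of the excluded window $[6,37]$ --- that $\delta=38$ is where the asymptotic absorption first holds --- is not the actual reason: the window is excluded because the computed optimal value of the bound genuinely exceeds $1.99\delta^{13/3}$ there (e.g.\ $4755.7\ldots>4686.4\ldots$ at $\delta=6$), and those $\delta$ are instead covered by the weaker constant of Lemma \ref{GemmellTrudgianLemma}. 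Your treatment of $\delta=2$ is right in spirit (a single term bounded by Corollary 3.2 of Cafure--Matera), but $\delta\in\{3,4,5\}$ still require the $r$-optimized computation rather than ``ample slack.''
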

\begin{proof}
    The first part of this proof is taken from \cite{CM}. For $\delta=2$ the expression $\sum_{j=1}^{\delta-1}j\#(\Pi_j)$ consists of only one term, namely $\Pi_1$ and therefore the result in Corollary 3.2 in \cite{CM} yields
    \begin{equation*}
        \#(\Pi_1)\leq\left(\frac{3}{2}\delta^4-2\delta^3+\frac{5}{2}\delta^2\right)\frac{q^{3n-6}}{(q-1)} \leq (0.893\delta^{13/3})\frac{q^{3n-6}}{(q-1)}.
    \end{equation*}

    From now on we may assume $\delta\geq3$. Let $r$ be a real number in the open interval $(1,\delta-1)$. From the expansion of $\sum_{j=1}^{\delta-1}j\#(\Pi_j)$, Corollary 3.2 and $(11)$ in \cite{CM}, we have
    \begin{equation}\label{purple}
        \sum_{j=1}^{\delta-1}j\#(\Pi_j) \leq \underbrace{\left(r\left(\frac{3}{2}\delta^4-2\delta^3+\frac{5}{2}\delta^2\right) + \left( \delta^5c_1+3\delta^4c_2+\delta^3c_3-\frac{3}{4}\delta^2c_4+2\delta^2\right)\right)}_{F}\frac{q^{3n-6}}{(q-1)}
    \end{equation}
    where $c_1,c_2,c_3,c_4$ are:
    \begin{align}\label{constants}
        c_1 &:= \sum_{j=\lfloor r\rfloor+1}^{\delta-1}\frac{1}{j^3}-\frac{1}{8j^4}, \quad c_2 := \sum_{j=\lfloor r\rfloor+1}^{\delta-1}\frac{1}{j^2}-\frac{1}{4j^3}, \quad c_3 := \sum_{j=\lfloor r\rfloor+1}^{\delta-1}\frac{2}{j}-\frac{11}{8j^2}, \quad c_4 := \sum_{j=\lfloor r\rfloor+1}^{\delta-1}\frac{1}{j}.
    \end{align}
    
    For $\delta \in [3, 3\times10^5]$ and $\delta \notin [6,37]$ we verify that
$        F \leq 1.99\delta^{\frac{13}{3}}$,
by computation. For example
    \begin{center}
      \begin{tabular}{|c|c|c|c|}
          \hline
          $\delta$ & $\textrm{Optimal } r$ & \textrm{Computed Bound} & $1.99\delta^{\frac{13}{3}}$\\
          \hline
          $5$  & $2$ & $2114.112\ldots$ & $2126.782\ldots$ \\
          $6$  & $2$ & $4755.719\ldots$ & $4686.426\ldots$ \\
          $37$ & $3$ & $12430121.798\ldots$ & $12427789.273\ldots$ \\
          $38$ & $3$ & $13920945.841\ldots$ & $13950250.14$  \\
          \hline
      \end{tabular}
\end{center}
    
    Upper bounds for $\delta \in [6,37]$ can be found in Appendix A.

    For $\delta \geq 3\times10^5$ we use a different approach. For any decreasing positive real function $f$ we can bound the sum of the function by integrals
    \begin{equation*}
        \int_a^{b+1}f(x)dx \leq \sum_{j=a}^bf(j) \leq \int_{a-1}^bf(x)dx.
    \end{equation*}
    Cafure and Matera use the $r$ as the lower limit of the integral, we use the sharper limit of $\myrfloor$ to improve on the upper bounds for $c_1, c_2, c_3$ and obtain a lower bound for $c_4$
    \begin{align*}
       c_1 &\leq \int_{\myrfloor}^{\delta-1}\frac{1}{x^3}-\frac{1}{8x^4}dx, \quad c_2 \leq \int_{\myrfloor}^{\delta-1}\frac{1}{x^2}-\frac{1}{4x^3}dx,\\
       c_3 &\leq \int_{\myrfloor}^{\delta-1}\frac{2}{x}-\frac{11}{8x^2}dx, \quad 
       c_4 \geq \int_{\myrfloor + 1}^{\delta}\frac{1}{x}dx 
    \end{align*}
    
    Let $\beta := \frac{\delta}{(\delta-1)}$, then for $\delta \geq 3\times10^5$ we have $1 < \beta < 1+4\times10^{-6}$, and 
    \begin{align}\label{brown}
        \delta^5c_1 &\leq -\frac{\delta^3}{2}+\frac{\beta^3\delta^2}{24}+\frac{\delta^5}{2(\myrfloor)^2}-\frac{\delta^5}{24(\myrfloor)^3} \\
        \nonumber \\
        3\delta^4c_2 &\leq -3\delta^3+\frac{3\beta^3\delta^2}{8}+\frac{3\delta^4}{\myrfloor}+\frac{3\delta^4}{8(\myrfloor)^2} \\
        \nonumber \\
        \delta^3c_3 &\leq 2\log{\left(\frac{\delta-1}{\myrfloor}\right)}+\frac{11\beta\delta^2}{8}-\frac{11\delta^3}{8\myrfloor} \\
        \nonumber \\
        -\frac{3}{4}\delta^2c_4 &\leq -\frac{3}{4}\delta^2\log{\left(\frac{\delta}{\myrfloor+1}\right)}.
    \end{align}
    
    Let $F$ be as in (\ref{purple}), then we obtain the following
    \begin{align*}
        F = \delta^5c_1+\frac{3}{2}(\lambda\delta^{\frac{1}{3}})\delta^4+3\delta^4c_2-2(\lambda\delta^{\frac{1}{3}})\delta^3+\delta^3c_3+\frac{5}{2}(\lambda\delta^{\frac{1}{3}})\delta^2-\frac{3}{4}\delta^2c_4+2\delta^2 \\
        \leq \left(\frac{1}{2(\myrfloor)^2}-\frac{1}{24(\myrfloor)^3}\right)\delta^5 + \left(\frac{3\lambda}{2}\right)\delta^{\frac{13}{3}} + \left(\frac{3}{\myrfloor}-\frac{3}{8(\myrfloor)^2}\right)\delta^4 - 2\lambda\delta^{\frac{10}{3}} \\
        + \left(2\log{\left(\frac{\delta-1}{\myrfloor}\right)}-\frac{7}{2}-\frac{11}{8\myrfloor}\right)\delta^3 + \left(\frac{5\lambda}{2}\right)\delta^{\frac{7}{3}} \\
        + \left(\frac{\beta^3}{24}+\frac{3\beta^2}{8}+\frac{11\beta}{8}+2-\frac{3}{4}\log{\left(\frac{\delta}{\myrfloor+1}\right)}\right)\delta^2.
    \end{align*}

    Since $-\frac{3}{4}\log{\left(\frac{\delta}{\myrfloor+1}\right)}$ is a decreasing function and which is negative at $\delta=3\times10^5$, so this can be removed. Using $r-1 < \myrfloor\leq r < \myrfloor+1$, we get
    
    \begin{align*}
        F \leq \left(\frac{1}{2(r-1)^2}-\frac{1}{24(r-1)^3}\right)\delta^5 + \left(\frac{3\lambda}{2}\right)\delta^{\frac{13}{3}} + \left(\frac{3}{r-1}-\frac{3}{8(r-1)^2}\right)\delta^4 - 2\lambda\delta^{\frac{10}{3}} \quad\quad\quad \\
        + \left(2\log{\left(\frac{\delta-1}{r-1}\right)}-\frac{7}{2}-\frac{11}{8(r-1)}\right)\delta^3 + \left(\frac{5\lambda}{2}\right)\delta^{\frac{7}{3}}
        + \left(\frac{\beta^3}{24}+\frac{3\beta^2}{8}+\frac{11\beta}{8}+2\right)\delta^2.
    \end{align*}

    Let $r:=\lambda\delta^{\frac{1}{3}}$, to obtain the optimum value of $\lambda$ we must minimise the leading coefficient of the upper bound of $F$:
    \begin{align*}
         & \left(\frac{1}{2(r-1)^2}-\frac{1}{24(r-1)^3}\right)\delta^5 + \left(\frac{3\lambda}{2}\right)\delta^{\frac{13}{3}} + \ldots \\
         &= \frac{\delta^5}{2(r-1)^2}-\frac{\delta^5}{24(r-1)^3}+\frac{3\lambda}{2}\delta^{\frac{13}{3}} + \ldots \\
         &= \frac{\delta^5}{2r^2(1-\frac{1}{r})^2}+\frac{3\lambda}{2}\delta^{\frac{13}{3}}-\frac{\delta^5}{24r^3(1-\frac{1}{r})^3} + \ldots \\
         &= \left(\frac{1}{2\lambda^2}\times\frac{1}{(1-\frac{1}{r})^2}+\frac{3\lambda}{2}\right)\delta^{\frac{13}{3}} - \ldots
    \end{align*}
    Since $r>0$ we have $1-\frac{1}{r}<1$, we want to minimise:
    \begin{equation*}
        \left(\frac{1}{2\lambda^2}+\frac{3\lambda}{2}\right)\delta^{\frac{13}{3}}
    \end{equation*}
    Giving $\lambda=\sqrt[3]{\frac{2}{3}}$.

    Let $\gamma := \frac{r}{r-1}$, then we have $1 < \gamma < 1.02$ and we can further simplify the bound:
    \begin{align}\label{gold}
        F \leq \left(\frac{\gamma^2}{2\lambda^2} + \frac{3\lambda}{2}\right)\delta^{\frac{13}{3}}-\frac{\gamma^3}{24\lambda^3}\delta^4+\frac{3\gamma}{\lambda}\delta^{\frac{11}{3}}+\left(\frac{5}{2}-\frac{3\gamma^2}{8\lambda^2}-2\lambda\right)\delta^{\frac{10}{3}} \quad\quad\quad\quad\quad\quad\quad \nonumber \\
        + 2\left(\log{\left(\frac{\gamma(\delta-1)}{\lambda\delta^{\frac{1}{3}}}\right)}-\frac{7}{4}\right)\delta^3 -\frac{11\gamma}{8\lambda}\delta^{\frac{8}{3}}+\frac{5\lambda}{2}\delta^{\frac{7}{3}}+\left(\frac{\beta^3}{24}+\frac{3\beta^2}{8}+\frac{11\beta}{8}+2\right)\delta^2
    \end{align}

    When $\delta$ is large, $\delta \geq 3\times10^{5}$ is enough, we can bound $F$ in the following way:
    \begin{equation*}
        F \leq \left(\frac{\gamma^2}{2\lambda^2} + \frac{3\lambda}{2}\right)\delta^{\frac{13}{3}}\approx (1.98855\ldots)\delta^{\frac{13}{3}}.
    \end{equation*}
    
    Therefore we obtain our result
    \begin{equation*}
        \sum_{j=1}^{\delta-1}j\#(\Pi_j)\leq (1.99\delta^{\frac{13}{3}})\frac{q^{3n-3}}{q^3(q-1)},
    \end{equation*}
    for $\delta\geq2$ where $\delta\notin[6,37]$.
\end{proof}

Computationally we checked every integer option of $r \in (1,\delta-1)$ for $\delta\in [3,3\times10^5]$, taking advantage of vectorization of addition in Python. With more computing power the upper bound on the $\delta$'s computationally checked could be increased, call this $M$. This would bring down the upper bound on $\gamma$, in turn lowering the value of the coefficient of our leading term $\left(\frac{\gamma^2}{2\lambda^2} + \frac{3\lambda}{2}\right)$. Irrespective of large $M$ since $\gamma > 1$ we cannot do any better than $(1.96555\ldots)\delta^{\frac{13}{3}}$.

With our computing power, checking $\delta \in [3,10^5]$ took around 20 minutes, $\delta \in [3, 2\times10^5]$ took just under an hour and $\delta \in [3,3\times10^5]$ took two and a half hours to confirm the result in Theorem \ref{GemmellTrudgian}. To obtain the best possible upper bound to two decimal places of $1.97\delta^{\frac{13}{3}}$ we would need to check the values of $\delta \in [3,4\times10^7]$.

After checking computationally we get $2.043\delta^{\frac{13}{3}}$ as an upper bound for every $\delta\geq2$. This is the best that we can reasonably do to 3 decimal places, leading to the following result.

\begin{lemma} \label{GemmellTrudgianLemma}
    Let $f \in \FF_{q}[X_1,\ldots,X_n]$ be a polynomial of degree $\delta >1$. We have
    \begin{equation*}
        \sum_{j=1}^{\delta-1}j\#(\Pi_j)\leq (2.043\delta^{\frac{13}{3}})\frac{q^{3n-3}}{q^3(q-1)}.
    \end{equation*}
    Where $\Pi_j$ is the set of planes L for which $f_L$ has j+1 absolutely irreducible $\FF_q$-factors.
\end{lemma}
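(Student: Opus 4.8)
The plan is to reuse the three-regime decomposition from the proof of Theorem~\ref{GemmellTrudgian}, weakening the target constant from $1.99$ to $2.043$ precisely so that the troublesome band $\delta\in[6,37]$ is absorbed into the general argument rather than handled by the separate estimates in Appendix~A. Writing $F$ for the quantity defined in~(\ref{purple}), and noting that $q^{3n-6}/(q-1)$ equals $q^{3n-3}/(q^3(q-1))$, it suffices to prove $F\le 2.043\,\delta^{13/3}$ for every integer $\delta\ge2$.

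First I would dispose of the two extreme regimes, which require no new work. For $\delta=2$ the sum reduces to the single term $\#(\Pi_1)$, and Corollary~3.2 of~\cite{CM} already gives $\#(\Pi_1)\le 0.893\,\delta^{13/3}\,q^{3n-6}/(q-1)$; since $0.893<2.043$ this case is immediate. For $\delta\ge 3\times10^5$ the analytic estimate established in the proof of Theorem~\ref{GemmellTrudgian}—passing to the integral bounds~(\ref{brown}), taking $r=\lambda\delta^{1/3}$ with $\lambda=\sqrt[3]{2/3}$, and collecting the leading term as in~(\ref{gold})—yields $F\le(1.98855\ldots)\delta^{13/3}$. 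The crucial observation is that this derivation never invoked the hypothesis $\delta\notin[6,37]$, so it transfers verbatim, and $1.98855\ldots<2.043$ closes the large-$\delta$ range with margin.

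The only genuinely new content is the middle range $3\le\delta\le 3\times10^5$, which now \emph{includes} the band $[6,37]$ on which the sharper constant $1.99$ fails (at $\delta=6$, for instance, $F=4755.7\ldots$ while $1.99\cdot6^{13/3}=4686.4\ldots$). For each such integer $\delta$ I would, exactly as before, minimise $F$ over the admissible integer values $r\in(1,\delta-1)$ via the vectorised summation of the constants $c_1,c_2,c_3,c_4$ of~(\ref{constants}), and record the maximum of $F/\delta^{13/3}$. The claim is that this maximum, over all $\delta$ in the range, does not exceed $2.043$; at $\delta=6$ one has $2.043\cdot6^{13/3}=4811\ldots>4755.7\ldots$, confirming the bound holds there with room to spare, and the worst case over the whole range pins the constant down to $2.043$ at three decimal places.

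The main obstacle is thus computational rather than conceptual: one must certify that the finite verification over $3\le\delta\le 3\times10^5$ indeed bounds every such $\delta$ by $2.043\,\delta^{13/3}$, with the binding case lying inside $[6,37]$. The delicate point is to fix the constant just large enough to dominate that worst case while still sitting below the large-$\delta$ ceiling $1.98855\ldots$; because the asymptotic value falls comfortably under $2.043$ and the tightest small-$\delta$ maximum is what forces the third decimal place, the two regimes abut without a gap, and the uniform bound for all $\delta\ge2$ follows.
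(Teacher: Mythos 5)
Your proposal is correct and follows essentially the same route as the paper: a new computational verification on the band $\delta\in[6,37]$ (where the worst case, near $\delta=8$, forces the constant $2.043$), combined with the observation that the stronger bound $1.99\delta^{13/3}$ from Theorem~\ref{GemmellTrudgian} already covers $\delta\in[2,5]$ and $\delta\geq 38$. The only cosmetic difference is that you propose re-running the computation over all of $[3,3\times10^5]$ with the weaker constant, whereas the paper only needs the new check on $[6,37]$ since everything else is inherited from Theorem~\ref{GemmellTrudgian}.
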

\begin{proof}
    For $\delta \in [6,37]$ this was checked computationally, for $\delta \in [2,5]$ and $\delta \geq 38$ this follows from the proof of Theorem \ref{GemmellTrudgian}.
\end{proof}
This result is improves on Cafure and Matera's result (stated in Proposition \ref{CandMProp4.1}) when applied in cases such as Carlet's Conjecture, where the following upper bound is used
\begin{equation*}
    \sum_{j=1}^{\delta-1}j\#(\Pi_j)\leq (2\delta^{\frac{13}{3}}+3\delta^{\frac{11}{3}})\frac{q^{3n-3}}{q^3(q-1)} \leq (5\delta^{\frac{13}{3}})\frac{q^{3n-3}}{q^3(q-1)}.
\end{equation*}

\section{Estimate on the number of $q$-rational points of a given absolutely irreducible $\FF_q$-hypersurface}
In their paper Cafure and Matera use the constant of their bound $(2\delta^{\frac{13}{3}}+3\delta^{\frac{11}{3}})$ to obtain estimates on the number of $q$-rational points of a given absolutely irreducible $\FF_q$-hypersurface, we insert our bound from Theorem \ref{GemmellTrudgian} in place of this to obtain the following result.

\begin{theorem}\label{HypersurfaceEstimate}
    For an absolutely irreducible $\FF_q$-hypersurface H of $\mathbb{A}^n$ of degree $\delta \notin [6,37]$ the following estimate holds:
    \begin{equation*}
        |\#(H\cap\FF_q^n)-q^{n-1}| \leq (\delta-1)(\delta-2)q^{n-\frac{3}{2}}+2.86\delta^{\frac{13}{3}}q^{n-2}.
    \end{equation*}
\end{theorem}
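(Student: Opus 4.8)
The plan is to follow Cafure and Matera's derivation of their hypersurface estimate \cite[Theorem 5.2]{CM} line by line, making a single substitution at the one place where the plane-counting bound enters. Their argument estimates $\#(H\cap\FF_q^n)$ by intersecting $H$ with the family of affine $2$-planes $L\subset\mathbb{A}^n$ and relating the hypersurface point count to the curve-section counts $\#(H\cap L\cap\FF_q^2)$ via the fact that every rational point lies on the same number of planes. Splitting the planes according to whether $f_L$ is absolutely irreducible, the absolutely irreducible sections are handled by a Weil/Aubry--Perret estimate for plane curves and produce the main term $(\delta-1)(\delta-2)q^{n-\frac{3}{2}}$, while the remaining sections are controlled exactly by the weighted count $\sum_{j=1}^{\delta-1} j\#(\Pi_j)$ bounded in Proposition \ref{CandMProp4.1}.

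First I would reproduce the portion of the proof that yields the main term and check that it is untouched by our work: the coefficient $(\delta-1)(\delta-2)$ of $q^{n-\frac{3}{2}}$ comes solely from the curve estimate on absolutely irreducible sections and does not involve the plane count, so it is unchanged, exactly as written in the statement. Next I would isolate, inside Cafure and Matera's bound for the coefficient of $q^{n-2}$, the contribution originating from Proposition \ref{CandMProp4.1}; in their writeup this is what forces the generous constant $5\delta^{\frac{13}{3}}$, arising from $2\delta^{\frac{13}{3}}+3\delta^{\frac{11}{3}}$ after the crude bound $\delta^{\frac{11}{3}}\le\delta^{\frac{13}{3}}$. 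I would then replace this contribution by the sharper constant $1.99\delta^{\frac{13}{3}}$ supplied by Theorem \ref{GemmellTrudgian}, which holds precisely for $\delta\notin[6,37]$, and re-assemble the coefficient.

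The delicate point, and the step I expect to be the main obstacle, is that the plane-counting term is \emph{not} the only source of the $q^{n-2}$ coefficient: the constant term of the curve estimate on the absolutely irreducible sections and the contribution of the degenerate planes also feed into it. Consequently the improvement does not pass straight from $5\delta^{\frac{13}{3}}$ to $1.99\delta^{\frac{13}{3}}$; instead one must collect these residual terms and show that, together with $1.99\delta^{\frac{13}{3}}$, they are dominated by $2.86\delta^{\frac{13}{3}}$. Since every residual term carries a strictly smaller power of $\delta$ than $\delta^{\frac{13}{3}}$, this reduces to verifying that their total never exceeds $0.87\delta^{\frac{13}{3}}$, which is immediate away from a bounded range of small degrees; and as the constraint $\delta\notin[6,37]$ is in any case already imposed by Theorem \ref{GemmellTrudgian}, it propagates to the statement at no additional cost.

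Finally I would record that the normalising factors $q^3(q-1)$ and the passage from the exponent $3n-3$ appearing in Theorem \ref{GemmellTrudgian} down to the hypersurface dimension are identical to those in \cite{CM}, so no new arithmetic beyond the constant bookkeeping above is required. Combining the unchanged main term with the re-assembled $q^{n-2}$ coefficient then yields the claimed estimate $|\#(H\cap\FF_q^n)-q^{n-1}| \leq (\delta-1)(\delta-2)q^{n-\frac{3}{2}}+2.86\delta^{\frac{13}{3}}q^{n-2}$.
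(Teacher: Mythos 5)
Your overall strategy is exactly the paper's: leave Cafure and Matera's proof of \cite[Theorem 5.2]{CM} intact, substitute the bound of Theorem \ref{GemmellTrudgian} for the single quantity $B=\sum_{j}j\#(\Pi_j)$ at the point where it enters (the ratio $B/A$), and re-assemble the coefficient of $q^{n-2}$. The main term $(\delta-1)(\delta-2)q^{n-\frac{3}{2}}$ is indeed untouched, and the restriction $\delta\notin[6,37]$ is inherited from Theorem \ref{GemmellTrudgian} just as you say.

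However, your constant bookkeeping contains a concrete error that would derail the final verification. In inequality (22) of \cite{CM} the plane-count contribution enters as $(1.99\delta^{13/3})\frac{q^{n-1}}{q^{n-1}-1}$, and the factor $\frac{q^{n-1}}{q^{n-1}-1}$ is bounded by $\frac{4}{3}$, so the leading contribution to the $q^{n-2}$ coefficient is $\frac{4}{3}\cdot 1.99\,\delta^{13/3}\approx 2.653\,\delta^{13/3}$, not $1.99\,\delta^{13/3}$. The residual terms ($\delta+1+2\delta^2+\frac{4}{3}$ in the paper's display) therefore have a budget of only about $0.207\,\delta^{13/3}$, not the $0.87\,\delta^{13/3}$ you allotted; and this check is far from ``immediate'' --- at $\delta=3$ one has $2\delta^2+\delta+\frac{7}{3}=23.33\ldots$ against $0.2066\ldots\times 3^{13/3}\approx 24.1$, so the constant $2.86$ is essentially forced by the case $\delta=3$. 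With your accounting you would either believe a smaller constant suffices (it does not, at $\delta=3$) or fail to see why $2.86$ is needed. You also omit the case $\delta=2$, which the paper treats separately using the explicit bound $\#(\Pi_1)\le\bigl(\frac{3}{2}\delta^4-2\delta^3+\frac{5}{2}\delta^2\bigr)\frac{q^{3n-6}}{q-1}$, obtaining the stronger coefficient $1.803\,\delta^{13/3}$ there; since the displayed chain of inequalities is only asserted for $\delta\ge 3$, this case cannot simply be folded into the generic argument.
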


\begin{proof}
    This follows from Cafure and Matera's proof of Theorem 5.2 \cite{CM}, the only change that was made is detailed as follows:

     Cafure and Matera \cite[p.\ 169]{CM} introduce variables $A$-$E$. We have improved upon the upper bound for $B$ in Theorem \ref{GemmellTrudgian} and therefore make the following change to $\frac{B}{A}$:
    \begin{equation*}
        B:=\sum_{j=1}^{\delta-1}j(\#\Pi_j), \quad
        \frac{B}{A}\leq (1.99\delta^{\frac{13}{3}})\frac{q^{n-2}}{q^{n-1}-1}.
    \end{equation*}

    Using this updated bound in (22) of Cafure and Matera's proof gives the following result for $\delta\geq3$ and $\delta \notin [6,37]$:
    \begin{align} \label{yellow}
        |N-q^{n-1}| &\leq q^{n-2}\left((\delta-1)(\delta-2)q^\frac{1}{2} + \delta + 1 + \delta^2 + (1.99\delta^{\frac{13}{3}})\frac{q^{n-1}}{q^{n-1}-1}+\frac{\delta^2(q-1)}{q}+\frac{4}{3}\right) \nonumber \\ 
        &\leq q^{n-2}\left((\delta-1)(\delta-2)q^\frac{1}{2} + \delta + 1 + \delta^2 + \frac{4}{3}(1.99\delta^{\frac{13}{3}})+\delta^2+\frac{4}{3}\right) \nonumber \\
        &\leq (\delta-1)(\delta-2)q^{n-\frac{3}{2}} + 2.86\delta^{\frac{13}{3}}q^{n-2}.
    \end{align}

    Additionally the result holds for $\delta=2$:
    \begin{align*}
        |N-q^{n-1}| &\leq q^{n-2}\left((\delta-1)(\delta-2)q^\frac{1}{2} + \delta + 1 + \delta^2 + \frac{4}{3}\left(\frac{3}{2}\delta^4-2\delta^3+\frac{5}{2}\delta^2\right)+\delta^2+\frac{4}{3}\right) \nonumber \\
        &\leq (1.803\delta^{\frac{13}{3}})q^{n-2}.
    \end{align*}
\end{proof}

Again we can obtain a result that holds for all $\delta$.
\begin{lemma}\label{HypersurfaceEstimateLemma}
    For an absolutely irreducible $\FF_q$-hypersurface H of $\mathbb{A}^n$ of degree $\delta$ we have
        \begin{equation*}
        |\#(H\cap\FF_q^n)-q^{n-1}| \leq (\delta-1)(\delta-2)q^{n-\frac{3}{2}}+G\delta^{\frac{13}{3}}q^{n-2},
    \end{equation*}
    where $G = 2.924$ for $\delta\geq 2$, and $G = 2.741$ for $\delta \geq 8$.
\end{lemma}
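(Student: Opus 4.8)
The plan is to mirror the structure of Theorem \ref{HypersurfaceEstimate} exactly, but to replace the input constant $1.99$ (valid only for $\delta \notin [6,37]$) with the universal constant $2.043$ from Lemma \ref{GemmellTrudgianLemma}, and then to sharpen the result for large $\delta$ by a direct asymptotic argument. First I would handle the range $\delta \geq 3$ by substituting the bound $\frac{B}{A} \leq (2.043\delta^{\frac{13}{3}})\frac{q^{n-2}}{q^{n-1}-1}$ into equation (22) of Cafure and Matera, exactly as in the proof of Theorem \ref{HypersurfaceEstimate}. Tracing the chain in (\ref{yellow}) with $2.043$ in place of $1.99$, the dominant point-counting term contributes $\frac{4}{3}(2.043\delta^{\frac{13}{3}})$, and the remaining lower-order terms $\delta+1+\delta^2+\delta^2+\frac{4}{3}$ must be absorbed into the $\delta^{\frac{13}{3}}$ coefficient; I expect the worst case to occur at the smallest admissible degree (here $\delta=3$, or $\delta=2$ handled separately via the Corollary 3.2 bound as in Theorem \ref{HypersurfaceEstimate}), where the lower-order terms are proportionally largest relative to $\delta^{\frac{13}{3}}$.

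For the first claim ($G = 2.924$, all $\delta \geq 2$), I would verify that $\frac{4}{3}\cdot 2.043 + \varepsilon(\delta) \leq 2.924$ where $\varepsilon(\delta)$ collects the lower-order contributions $(2\delta^2 + \delta + \tfrac{7}{3})/\delta^{\frac{13}{3}}$ arising in (\ref{yellow}); since $\frac{4}{3}\cdot 2.043 = 2.724$, there is a margin of $0.2$ for the lower-order terms, and checking $\delta = 2,3,4,\dots$ confirms $\varepsilon$ is decreasing and small enough. The $\delta=2$ case I would treat directly using the Corollary 3.2 bound $\tfrac{3}{2}\delta^4 - 2\delta^3 + \tfrac{5}{2}\delta^2$ as in the final display of Theorem \ref{HypersurfaceEstimate}, confirming the resulting constant stays below $2.924$.

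For the second claim ($G = 2.741$, $\delta \geq 8$), the observation is that as $\delta$ grows the lower-order terms decay like $\delta^{-\frac{7}{3}}$ and smaller, so the coefficient tends to $\frac{4}{3}\cdot 2.043 = 2.724$ from above. I would solve $\frac{4}{3}\cdot 2.043 + (2\delta^2 + \delta + \tfrac{7}{3})/\delta^{\frac{13}{3}} \leq 2.741$ for the threshold on $\delta$, finding that the crossover occurs somewhere in the single digits, and then verify by direct evaluation that $\delta = 8$ is the first integer degree for which the inequality holds (and that it continues to hold for all larger $\delta$ by monotonicity of the residual). The main obstacle is bookkeeping rather than conceptual: I must ensure the intermediate inequalities in (\ref{yellow}) — in particular the replacement of $\frac{q^{n-1}}{q^{n-1}-1}$ by $\frac{4}{3}$ and of $\frac{\delta^2(q-1)}{q}$ by $\delta^2$ — remain valid uniformly in $q$, and that the threshold $\delta = 8$ is genuinely the sharp integer boundary, which requires evaluating the residual at $\delta = 7$ to confirm it fails there.
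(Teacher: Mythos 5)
Your proposal is correct and follows essentially the same route as the paper: substitute the universal constant $2.043$ from Lemma \ref{GemmellTrudgianLemma} into the chain of inequalities from Theorem \ref{HypersurfaceEstimate}, absorb the residual $(2\delta^2+\delta+\tfrac{7}{3})/\delta^{\frac{13}{3}}$ into the leading coefficient with worst case at $\delta=3$ (giving $2.924$) and at $\delta=8$ (giving $2.741$), and handle $\delta=2$ separately via the $1.803\delta^{13/3}$ bound. Your identification of $\delta=8$ as the sharp integer threshold (with $\delta=7$ failing) matches the paper's case split exactly.
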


\begin{proof}
    Case: $\delta \geq 8$ we take $G=2.741$. We take $\delta\geq8$ in (\ref{yellow}) and obtain the result.
    
    Case: $\delta \geq 2$ we take $G = 2.924$.
    
    For $\delta \in [6,37]$ the same logic was used as in the proof of Theorem \ref{HypersurfaceEstimate} except using the improvement to the bound of $B$ from Lemma \ref{GemmellTrudgianLemma} and make the following change to $\frac{B}{A}$:
    \begin{equation*}
        B:=\sum_{j=1}^{\delta-1}j(\#\Pi_j), \quad
        \frac{B}{A}\leq (2.043\delta^{\frac{13}{3}})\frac{q^{n-2}}{q^{n-1}-1}.
    \end{equation*}

    Producing the following result for $\delta\geq3$:
    \begin{align*}
        |N-q^{n-1}| &\leq q^{n-2}\left((\delta-1)(\delta-2)q^\frac{1}{2} + \delta + 1 + \delta^2 + (2.043\delta^{\frac{13}{3}})\frac{q^{n-1}}{q^{n-1}-1}+\frac{\delta^2(q-1)}{q}+\frac{4}{3}\right) \nonumber \\ 
        &\leq q^{n-2}\left((\delta-1)(\delta-2)q^\frac{1}{2} + \delta + 1 + \delta^2 + \frac{4}{3}(2.043\delta^{\frac{13}{3}})+\delta^2+\frac{4}{3}\right) \nonumber \\
        &\leq (\delta-1)(\delta-2)q^{n-\frac{3}{2}} + 2.924\delta^{\frac{13}{3}}q^{n-2}.
    \end{align*}

    For $\delta = 2$ this follows from the proof of Theorem \ref{HypersurfaceEstimate}.
\end{proof}

\section{Improved results on Carlet's conjecture}
We are now able to increase the bound on $n$ for which $f_{\textrm{inv}}$ is $k$th order sum-free.

\begin{theorem} \label{CarletImprovement}
    If $k\geq 3$ and $n \geq \frac{13}{3}k-2$
    then $f_{\textrm{inv}}$ is not kth order sum-free.
\end{theorem}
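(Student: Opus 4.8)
The plan is to translate the statement "$f_{\textrm{inv}}$ is not $k$th order sum-free" into a statement about the existence of a point on a hypersurface, and then invoke the point-counting estimate of Theorem \ref{HypersurfaceEstimate} to guarantee that such a point exists once $n$ is large enough relative to $k$. Recall that $f_{\textrm{inv}}$ fails to be $k$th order sum-free precisely when there is some $k$-dimensional $\FF_2$-affine subspace $X \subset \FF_{2^n}$ with $\sum_{x \in X} f_{\textrm{inv}}(x) = 0$. Writing $X = v + \langle w_1, \ldots, w_k \rangle$ and expanding the sum over $\FF_2$-linear combinations of the basis vectors, one obtains a symmetric function of $2^k$ field elements. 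The first step, therefore, is to express this vanishing-sum condition as the vanishing of a polynomial system over $\FF_{2^n}$, or rather over $\overline{\FF}_{2^n}$, whose $\FF_{2^n}$-rational points correspond to the affine subspaces $X$ we seek.

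**Next I would** set $q = 2^n$ and identify the relevant hypersurface. Following the standard approach in this circle of ideas (as in Hou--Zhao and the references therein), the condition that the inverse function sums to zero over a $k$-flat can be encoded, after clearing denominators, as an absolutely irreducible $\FF_q$-hypersurface $H \subset \mathbb{A}^N$ in some ambient dimension $N$ depending on $k$, with degree $\delta$ that is a fixed function of $k$ (of the order $2^k$). The crucial inputs are that $H$ is absolutely irreducible and that its degree $\delta$ is controlled by $k$ and independent of $n$. I would then apply Theorem \ref{HypersurfaceEstimate} (or Lemma \ref{HypersurfaceEstimateLemma} to cover all degrees) to bound $\#(H \cap \FF_q^N)$ from below: the estimate gives
\begin{equation*}
    \#(H \cap \FF_q^N) \geq q^{N-1} - (\delta-1)(\delta-2)q^{N-\frac{3}{2}} - 2.86\,\delta^{\frac{13}{3}}q^{N-2}.
\end{equation*}
The main term $q^{N-1}$ dominates the error terms as soon as $q = 2^n$ is large compared to $\delta^{13/3} \sim 2^{13k/3}$.

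**The key calculation** is then to compare the main term with the error terms and to subtract off the number of "trivial" or "degenerate" solutions — those corresponding to affine subspaces that are not genuinely $k$-dimensional, or to points on $H$ that do not yield valid flats. I expect the error terms to be bounded by a constant multiple of $q^{N-1} \cdot \delta^{13/3} q^{-1/2}$, which is of size roughly $2^{N-1} \cdot 2^{13k/3 - n/2}$. Demanding that this be smaller than the main term $q^{N-1}$, together with accounting for the degenerate points (whose count is of comparable or smaller order), yields a condition of the shape $q^{1/2} > C\delta^{13/3}$, i.e. $n/2 > \tfrac{13}{3}\cdot\tfrac{k}{2} + O(1)$, and unwinding this gives exactly $n \geq \frac{13}{3}k - 2$. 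The exponent $13/3$ is inherited directly from the $\delta^{13/3}$ in our improved hypersurface bound, which is precisely why the constant in Theorem \ref{blue} improves on Hou--Zhao.

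**The main obstacle** I anticipate is not the point-counting arithmetic — that is essentially mechanical once the hypersurface is in hand — but rather establishing that the relevant hypersurface is absolutely irreducible of the claimed degree, and carefully bounding the number of degenerate points that must be excluded. Absolute irreducibility is a delicate and genuinely nontrivial property; here I would lean on the structural analysis already carried out in the literature (Hou--Zhao, Carlet, and Hou) which sets up exactly this hypersurface and verifies its irreducibility, so that our contribution is to feed the sharper constant $2.86$ into their framework and re-optimise the resulting inequality. The secondary subtlety is ensuring the degenerate contribution is of strictly smaller order than the gap between the main term and the error term, so that a genuine, non-degenerate $k$-flat with vanishing inverse-sum is guaranteed to exist whenever $n \geq \frac{13}{3}k - 2$.
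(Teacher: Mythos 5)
Your proposal follows essentially the same route as the paper: translate the failure of $k$th order sum-freedom into the existence of a suitable $\FF_q$-point ($q=2^n$) via the Hou--Zhao framework, apply the improved hypersurface estimate (Theorem \ref{HypersurfaceEstimate}, or Lemma \ref{HypersurfaceEstimateLemma} where the degree $2^{k-1}$ falls in the excluded range $[6,37]$, i.e.\ $k=4,5,6$) to the absolutely irreducible polynomial $\Theta_k$ of degree $\delta=2^{k-1}$ in $k$ variables, subtract the degenerate locus $V_{\FF_{2^n}^k}(\Theta_k)\cap V_{\FF_{2^n}^k}(\Delta)$ (bounded by $\delta^2 q^{k-2}$ via Lemma 2.2 of Cafure--Matera), and solve the resulting inequality for $n$. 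Two points need repair before the sketch becomes a proof. First, your key calculation misplaces a power of $q$: the $\delta^{13/3}$ error term carries $q^{N-2}$, not $q^{N-3/2}$, so the binding condition is $q \gtrsim \delta^{13/3}$, giving $n \gtrsim \frac{13}{3}(k-1)$; the condition $q^{1/2} > C\delta^{13/3}$ that you wrote would, unwound correctly, give only $n \gtrsim \frac{26}{3}(k-1)$ --- half the claimed strength --- and your subsequent line $n/2 > \frac{13}{3}\cdot\frac{k}{2}+O(1)$ does not follow from it. (The $(\delta-1)(\delta-2)q^{N-3/2}$ term separately forces $n \gtrsim 4(k-1)$, which is weaker and so not binding.) Pinning down the additive constant $-2$ is not purely asymptotic: the paper checks $4\leq k<100$ explicitly, with the worst case at $k=9$, and handles $k\geq 100$ by estimating the larger root of the quadratic in $y=q^{1/2}$, which yields $n\geq\frac{13}{3}k-2.817$. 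Second, the hypersurface argument begins at $k=4$; the case $k=3$ must be quoted separately from Carlet's result that $f_{\textrm{inv}}$ is not $3$rd order sum-free for $n\geq 6$, which your hypothesis $n\geq\frac{13}{3}\cdot 3-2=11$ comfortably supplies.
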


Before the proof of Theorem \ref{CarletImprovement} we need some additional notation. For a field $\FF$ and polynomial $f \in \FF[X_1,\ldots,X_k]$ define the zero set, or affine variety, $V_{\FF^k}$ as:
\begin{equation*}
    V_{\FF^k}(f) = \{(x_1,\ldots,x_k)\in\FF^k : f(x_1,\ldots,x_k)=0\}.
\end{equation*}
Additionally, let $\Lambda_k$ denote the set of all 2-adic partitions of $2^{k-1}$, where all of the parts of the partition are powers of 2, and define
\begin{equation*}
    \Theta_k(X_1,\ldots,X_k)=\sum_{\lambda\in\Lambda_k}m_{\lambda}(X_1,\ldots,X_k),
\end{equation*}
where $m_{\lambda}(X_1,\ldots,X_k)$ is the monomial symmetric polynomial associated to the partition $\lambda$ and the degree of $\Theta_k=2^{k-1}$. Finally, $\Delta(X_1,\ldots,X_k)$ is the Moore determinant over $\FF_2$. For details on the construction of $\Theta_k$ and $\Delta$ see \cite[Section 3]{Hou1}.

\begin{proof}
    This proof was inspired by the proofs of Theorem 4.2 of \cite{Carlet4} and Theorem 5.1 of \cite{Hou1}.
    
    Assume $k\geq4$ since $f_{\textrm{inv}}$ is not 3rd order sum free for $n\geq6$ \cite[Theorem 6]{Carlet3}. By Theorem 3.2 of \cite{Hou1} it is enough to show
    \begin{equation*}
        |V_{\FF_{2^n}^k}(\Theta_k) \backslash V_{\FF_{2^n}^k}(\Delta)|>0.
    \end{equation*} 
    Let $q=2^n$. Since $\Theta_k$ is absolutely irreducible, and $\delta = 2^{k-1}$ by Theorem \ref{HypersurfaceEstimate} and Lemma \ref{HypersurfaceEstimateLemma},
    \begin{equation*}
        |V_{\FF_{2^n}^k}(\Theta_k)| \geq q^{k-1}-(2^{k-1}-1)(2^{k-1}-2)q^{k-\frac{3}{2}}-G(2^{k-1})^{\frac{13}{3}}q^{k-2},
    \end{equation*}
    where $G$ is $2.924$ for $4\leq k \leq 7$ and $2.741$ when $k \geq 8$.\

    By Lemma 2.2 from \cite{CM}
    \begin{equation*}
        |V_{\FF_{2^n}^k}(\Theta_k) \cap V_{\FF_{2^n}^k}(\Delta)| \leq (2^{k-1})^2q^{k-2}<2^{2k}q^{k-2}.
    \end{equation*}

    Therefore
    \begin{align}\label{VarietyEquivalence}
        |V_{\FF_{2^n}^k}(\Theta_k) \backslash V_{\FF_{2^n}^k}(\Delta)| &\geq q^{k-1}-(2^{k-1}-1)(2^{k-1}-2)q^{k-\frac{3}{2}} - (G\times2^{\frac{13(k-1)}{3}} + 2^{2k})q^{k-2} \nonumber \\
        &= q^{k-2}(q-(2^{k-1}-1)(2^{k-1}-2)q^{\frac{1}{2}}
        - (G\times2^{\frac{13(k-1)}{3}} + 2^{2k})).
    \end{align}
    
    For $4 \leq k < 100$ we obtain explicit bounds for $n$ of the form $n\geq \frac{13}{3}k-J$ or $n> \frac{13}{3}k-J$ for which (\ref{VarietyEquivalence}) is positive. Since Carlet's conjecture has been proven true for odd and composite $n$ \cite{Hou2}, and for even $n$ \cite{Carlet4} and we only have to concern ourselves with $n$ which are odd and prime. For example when $k=4$,  $n\geq 20$ gives us (\ref{VarietyEquivalence}) $>0$, so we can take $n>19$ producing the bound $n>\frac{13}{3}k-4.3\ldots$. When $k=9$ we get our worst value of $J$: $n\geq \frac{13}{3}k-2$.

    For $k \geq 100$, let $y:=q^{\frac{1}{2}}=2^{\frac{n}{2}}$, then we can rewrite (\ref{VarietyEquivalence}) as
    \begin{equation}\label{black}
        |V_{\FF_{2^n}^k}(\Theta_k) \backslash V_{\FF_{2^n}^k}(\Delta)| > q^{k-2}(y^2-2^{2(k-1)}y-(2.86\times2^{\frac{13(k-1)}{3}} + 2^{2k})).
    \end{equation}
    
    Let $y_0$ represent the larger root of the quadratic (\ref{black}) we get:
    \begin{align*}
        y_0 &= \frac{1}{2}\left(2^{2(k-1)} + \sqrt{2^{4(k-1)}+4\times2.86\times2^{\frac{13(k-1)}{3}}+2^{2k+2}}\right) \\
        &= \frac{1}{2}\left(2^{-\frac{(k-1)}{6}} + \sqrt{2^{-\frac{(k-1)}{3}}+11.44+2^{\frac{19-7k}{3}}}\right)\times2^{\frac{13(k-1)}{6}} \\
        &\leq \left(\frac{1}{65536\times\sqrt{2}} + \sqrt{\frac{1}{2^{33}}+11.44+\frac{1}{2^{227}}}\right)\times2^{\frac{13(k-1)}{6}-1} \quad \quad (\textrm{since } k \geq 100).
    \end{align*}

    Hence it is sufficient to show that
    \begin{equation*}
        y=2^{\frac{n}{2}} \geq \left(\frac{1}{65536\times\sqrt{2}} + \sqrt{\frac{1}{2^{33}}+11.44+\frac{1}{2^{227}}}\right)\times2^{\frac{13(k-1)}{6}-1}
    \end{equation*}

    Rewriting this in terms of $n$ gives:
    \begin{align*}
        n &\geq \frac{13}{3}k + 2\log_2\left(\frac{1}{65536\times\sqrt{2}} + \sqrt{\frac{1}{2^{33}}+11.44+\frac{1}{2^{227}}}\right)-\frac{19}{3} \\
        &\geq\frac{13}{3}k -2.817.
    \end{align*}
\end{proof}
This condition can be written as $3\leq k \leq \frac{3}{13}n+0.461$, proving Theorem \ref{blue}.

\section*{Acknowledgements}
The first author thanks Daniel Petrov for his advice on improving the efficiency of repetitive operations in Python.

\newpage
\appendix
\section{Exceptions of Theorem \ref{GemmellTrudgian}}
The table below contains the optimal upper bounds for $\delta \in [6,37]$ calculated using:
\begin{equation}
    r\underbrace{\left(\frac{3}{2}\delta^4-2\delta^3+\frac{5}{2}\delta^2\right)}_{P} + \left( \delta^5c_1+3\delta^4c_2+\delta^3c_3-\frac{3}{4}\delta^2c_4+2\delta^2\right)
\end{equation}
where $c_1,\ldots,c_4$ are from (\ref{constants}). Each integer value of $r \in (1,\delta-1)$ was checked to choose $r$ optimally. Since $P$ is positive and increasing we can freely choose the value of $r$, therefore we took $r=\myrfloor$ and choose the value of $r$ which gives the smallest bound.

    \begin{center}
      \begin{tabular}{|c|c|c|c|}
          \hline
          $\delta$ & $\textrm{Choose } r$ & \textrm{Computed Bound} & $1.99\delta^{\frac{13}{3}}$\\
          \hline
          $6$   & $2$ & $4755.719\ldots$ &      $4686.426\ldots$     \\
          $7$   & $2$ & $9356.323\ldots$ &      $9139.966\ldots$     \\
          $8$   & $2$ & $16734.776\ldots$ &     $16302.08$           \\
          $9$   & $2$ & $27873.380\ldots$ &     $27158.385\ldots$    \\
          $10$  & $2$ & $43926.616\ldots$ &     $42873.250\ldots$    \\
          $11$  & $2$ & $66229.915\ldots$ &     $64796.972\ldots$    \\
          $12$  & $2$ & $96308.469\ldots$ &     $94472.442\ldots$    \\
          $13$  & $2$ & $135886.071\ldots$ &    $133641.375\ldots$   \\
          $14$  & $2$ & $186893.968\ldots$ &    $184250.170\ldots$   \\
          $15$  & $2$ & $251479.733\ldots$ &    $248455.452\ldots$   \\
          $16$  & $2$ & $332016.143\ldots$ &    $328629.339\ldots$   \\
          $17$  & $2$ & $431110.078\ldots$ &    $427364.459\ldots$   \\
          $18$  & $2$ & $551611.412\ldots$ &    $547478.747\ldots$   \\
          $19$  & $2$ & $696621.921\ldots$ &    $692020.054\ldots$   \\
          $20$  & $2$ & $869504.192\ldots$ &    $864270.569\ldots$   \\
          $21$  & $2$ & $1073890.535\ldots$ &   $1067751.082\ldots$  \\
          $22$  & $2$ & $1313691.905\ldots$ &   $1306225.105\ldots$  \\
          $23$  & $2$ & $1593106.818\ldots$ &   $1583702.853\ldots$  \\
          $24$  & $2$ & $1916630.275\ldots$ &   $1904445.097\ldots$  \\
          $25$  & $2$ & $2289062.689\ldots$ &   $2272966.913\ldots$  \\
          $26$  & $2$ & $2715518.813\ldots$ &   $2694041.310\ldots$  \\
          $27$  & $2$ & $3201436.665\ldots$ &   $3172702.77$         \\
          $28$  & $2$ & $3752586.464\ldots$ &   $3714250.685\ldots$  \\
          $29$  & $2$ & $4375079.559\ldots$ &   $4324252.720\ldots$  \\
          $30$  & $2$ & $5075377.363\ldots$ &   $5008548.076\ldots$  \\
          $31$  & $2$ & $5860300.290\ldots$ &   $5773250.692\ldots$  \\
          $32$  & $3$ & $6718968.141\ldots$ &   $6624752.368\ldots$  \\
          $33$  & $3$ & $7653256.765\ldots$ &   $7569725.817\ldots$  \\
          $34$  & $3$ & $8684471.693\ldots$ &   $8615127.653\ldots$  \\
          $35$  & $3$ & $9819513.439\ldots$ &   $9768201.324\ldots$  \\
          $36$  & $3$ & $11065570.052\ldots$ &  $11036479.977\ldots$ \\
          $37$  & $3$ & $12430121.798\ldots$ &  $12427789.273\ldots$ \\
          \hline
      \end{tabular}
\end{center}


\begin{thebibliography}{1}
\bibitem{Blondeau}
C. Blondeau and K. Nyberg, {\it Perfect nonlinear functions and cryptography}, Finite Fields Appl., vol 32, 120--147, 2015.

\bibitem{CM}
A. Cafure and G. Matera, {\it Improved explicit estimates on the number of solutions of equations over a finite field}, Finite Fields Appl., vol 12 (2), 155--185, 2006.

\bibitem{Carlet1}
C. Carlet, {\it Boolean Functions for Cryptography and Coding Theory}, Cambridge: Cambridge University Press, 2021.

\bibitem{Carlet2}
C. Carlet, {\it Two generalizations of almost perfect nonlinearity},  J Cryptol, vol 28, article no 20, 2025.

\bibitem{Carlet3}
C. Carlet, {\it On the vector subspaces of $\FF_{2^{n}}$ over which the multiplicative inverse function sums to zero}, Des. Codes Cryptography, vol 93 (4), 1237--1254, 2024.

\bibitem{Carlet4}
C. Carlet and X. Hou, {\it More on the sum-freedom of the multiplicative inverse function}, Preprint available at arXiv:2407.14660, 2025.

\bibitem{Charpin}
P. Charpin and J. Peng, {\it Differential uniformity and the associated codes of cryptographic functions}, Adv. Math. Comm., vol 13 (4), 579--600, 2019.

\bibitem{Hou1}
X.-d. Hou and S. Zhao, {\it Two absolutely irreducible polynomials over $\FF_{2}$ and their applications to a conjecture by Carlet}, Finite Fields Appl., 109:102713, 2026.

\bibitem{Hou2}
X.-d. Hou and S. Zhao, {\it On a conjecture about the sum-freedom of the binary multiplicative inverse function}, Preprint available at arXiv:2504.21805v1, 2025.

\bibitem{Huang}
M.-D. Huang and Y.-C. Wong, {\it An algorithm for approximate counting of points on algebraic sets over finite fields}, In: Algorithmic Number Theory, Berlin: Springer, 514--527, 1998.

\end{thebibliography}
\end{document}